\definecolor{citegreen}{rgb}{0,0.3,0}
\definecolor{refred}{rgb}{0.5,0,0}
\theoremstyle{plain}
\newtheorem{theorem}{Theorem}[section]
\newtheorem{lemma}[theorem]{Lemma}
\newtheorem{ackn}{Acknowledgements\hspace{-.2em}}
\theoremstyle{definition}
\newtheorem{definition}[theorem]{Definition}
\theoremstyle{remark}
\newtheorem{remark}[theorem]{Remark}
\crefname{definition}{Definition}{Definitions}
\crefname{theorem}{Theorem}{Theorems}
\crefname{lemma}{Lemma}{Lemmas}
\crefname{step}{Step}{Steps}
\crefname{substep}{Step}{Steps}
\crefname{claim}{Claim}{Claims}
\crefname{proposition}{Proposition}{Propositions}
\crefname{corollary}{Corollary}{Corollaries}
\crefname{remark}{Remark}{Remarks}
\crefname{section}{Section}{Sections}
\crefname{subsection}{Section}{Sections}
\crefname{chapter}{Chapter}{Chapters}
\crefname{figure}{Figure}{Figures}
\crefname{appendix}{Appendix}{Appendices}
\crefname{equation}{}{}
\crefname{enumi}{}{}
\crefname{enumii}{}{}
\numberwithin{equation}{section}
\def\ringg#1{\accentset{\circ}{#1}}
\def\varempty{\text{\rm{\O}}}
\def\emptyarg{\kern1pt\cdot\kern1pt}
\newcommand{\R}{\mathbb{R}}
\newcommand{\N}{\mathbb{N}}
\newcommand{\dif}{\,\mathrm{d}}
\newcommand{\D}{\mathrm{D}\kern.01cm}
\DeclareMathOperator{\ncapa}{\mathfrak{c}}
\let\oldchi\chi
\renewcommand{\chi}{\raisebox{\depth}{\(\oldchi\)}}
\DeclareMathOperator{\ee}{e}
\newcommand{\MF}{\mathscr{F}}
\newcommand{\MG}{\mathscr{G}}
\newcommand{\kst}[1][C]{\mathrm{#1}}
\let\H\relax\DeclareMathOperator{\H}{H}
\DeclareMathOperator{\h}{h}
\DeclareMathOperator{\Ric}{Ric}
\newcommand{\AVR}{\mathrm{AVR}}
\newcommand{\sca}{\mathrm{R}}
\def\abs#1{\ifx\relax#1\relax\empty|\emptyarg|\else
 {\mathchoice%
 {\@trimleftright{\lvert}{\rvert}{\displaystyle}{#1}}%
 {\lvert #1\rvert}%
 {\@trimleftright{\lvert}{\rvert}{\scriptstyle}{#1}}%
 {\@trimleftright{\lvert}{\rvert}{\scriptscriptstyle}{#1}}}%
 \fi%
}
   \def\MR#1{}
\renewenvironment{cases}[1][@{\ }rcl@{\hspace{1cm}}l]{\bgroup\arraycolsep=1.4pt\left\lbrace\kern-3pt\begin{array}{#1}}{\end{array}\right.\egroup}
\newcommand*{\@trimleftright}[4]{%
 \sbox0{$#3#4\m@th$}%
 \sbox2{$#3\vcenter{}$}
 \sbox4{$#3\left#1\right#2$}%
 \dimen0=\dimexpr\ht0\relax %
 \ifdim\dimen0<\ht2 %
 \dimen0=\ht2 %
 \fi
 \dimen2=\dp0 %
 \ifdim\dimen2<\z@
 \dimen2=\z@
 \fi
 \ifdim\ht4>\dimen0 %
 \dimen0=\ht4 %
 \fi
 \ifdim\dp4>\dimen2 %
 \dimen2=\dimexpr\dp4\relax %
 \fi
 \dimen0=\dimexpr(\dimen0-\dimen2)/2 -\ht2\relax
 \raisebox{\dimen0}{%
 $#3\left#1\raisebox{-\dimen0}{\box0}\right#2\m@th$%
 }%
}
\patchcmd{\@setaddresses}{\indent}{\noindent}{}{}
\patchcmd{\@setaddresses}{\indent}{\noindent}{}{}
\patchcmd{\@setaddresses}{\indent}{\noindent}{}{}
\patchcmd{\@setaddresses}{\indent}{\noindent}{}{}
\title[A Note on Ricci--pinched three--manifolds]{A Note on Ricci--pinched three--manifolds}
\date{\today}
\author[Luca Benatti]{Luca Benatti}
\address[Luca Benatti]{Dipartimento di Matematica, Universit\`a di Pisa, Italy}
\email[L. Benatti]{luca.benatti@unipi.it}
\author[Carlo Mantegazza]{Carlo Mantegazza}
\address[Carlo Mantegazza]{Dipartimento di Matematica e Applicazioni "Renato Caccioppoli", Universit\`a di Napoli Federico II \& Scuola Superiore Meridionale, Napoli, Italy}
\email[C. Mantegazza]{carlo.mantegazza@unina.it}
\author[Francesca Oronzio]{Francesca Oronzio}
\address[Francesca Oronzio]{Institutionen f\"{o}r Matematik, Kungliga Tekniska H\"{o}gskolan, Stockholm, Sweden}
\email[F. Oronzio]{oronzio@kth.se}
\author[Alessandra Pluda]{Alessandra Pluda}
\address[Alessandra Pluda]{Dipartimento di Matematica, Universit\`a di Pisa, Italy}
\email[A. Pluda]{alessandra.pluda@unipi.it}
\date{\today}
\subjclass[2020]{53C21, 53E10, 83C99, 49J45}
\keywords{Riemannian $3$--manifold, Ricci--pinching, linear potential theory}
\begin{document}

\begin{abstract}
Let $(M, g)$ be a complete, connected, non--compact Riemannian $3$--manifold. Suppose that $(M,g)$ satisfies the {\em Ricci--pinching condition} 
$\Ric\geqslant\varepsilon\mathrm{R} g$ for some $\varepsilon>0$, where $\Ric$ and $\mathrm{R}$ are the Ricci tensor and scalar curvature, respectively. In this short note, we give an alternative proof based on potential theory of the fact that if $(M,g)$ has Euclidean volume growth, then it is flat. Deruelle--Schulze--Simon~\cite{deruelle_schulze_simon_2022} and  Huisken--K\"{o}rber~\cite{huisken_koerber_24} have already shown this result and together with the contributions by Lott~\cite{lott_2024} and Lee--Topping~\cite{lee_topping_2022} led to a proof of the so--called {\em Hamilton's pinching conjecture}.
\end{abstract}

\maketitle


\section{Introduction}

Let $(M, g)$ be a complete and connected Riemannian $3$--manifold. We denote by $\Ric$ and $\mathrm{R}$ the Ricci and scalar curvature, respectively.
\begin{definition}
A Riemannian manifold $(M,g)$ is {\em Ricci--pinched} if $\Ric\geqslant 0$ and there exists a constant $\varepsilon>0$ such that $\Ric\geqslant\varepsilon\mathrm{R} g$.
\end{definition}

The following theorem was known as {\em Hamilton's pinching conjecture} 
and its proof required the joint efforts of Lott~\cite{lott_2024}, Deruelle--Schulze--Simon~\cite{deruelle_schulze_simon_2022} and Lee--Topping~\cite{lee_topping_2022}.

\begin{theorem}\label{thmpinchingtheorem}
Let $(M,g)$ be a complete, connected Riemannian $3$--manifold. Suppose that $(M,g)$ is Ricci--pinched, then it is  flat or compact.
\end{theorem}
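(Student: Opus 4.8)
The plan is to reduce \Cref{thmpinchingtheorem} to the case of Euclidean volume growth and to settle that case by linear potential theory. If $(M,g)$ is compact there is nothing to prove, so I assume throughout that $(M,g)$ is complete, connected and \emph{non--compact}, and I aim to show it is flat. Fixing a point $p\in M$, the hypothesis $\Ric\geqslant 0$ activates the Bishop--Gromov comparison, so that $r\mapsto \Hff^3(B_r(p))/r^3$ is non--increasing; in particular the asymptotic volume ratio
\[
\AVR(g)\;=\;\lim_{r\to+\infty}\frac{\Hff^3(B_r(p))}{\tfrac{4}{3}\pi r^3}\;\in\;[0,1]
\]
is well defined and independent of $p$. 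This yields the basic dichotomy: either $\AVR(g)>0$ (\emph{Euclidean volume growth}) or $\AVR(g)=0$.

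First I would treat the Euclidean case $\AVR(g)>0$, which is the heart of the matter and the part amenable to potential theory (already settled by Deruelle--Schulze--Simon~\cite{deruelle_schulze_simon_2022} and Huisken--K\"orber~\cite{huisken_koerber_24}). Euclidean volume growth forces $(M,g)$ to be \emph{non--parabolic}, since $\int^{+\infty} r\,\Hff^3(B_r(p))^{-1}\dif r<+\infty$, hence there exists a positive minimal Green's function $G$ with pole at $p$, normalised so that $G$ behaves like the Euclidean fundamental solution near $p$. On the level sets $\{G=t\}$ one introduces Agostiniani--Mazzieri type quantities built from $\abs{\nabla G}$ and the geometry of these hypersurfaces, whose monotonicity in $t$ is driven by $\Ric\geqslant 0$ together with a Kato inequality. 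The endpoints carry geometric meaning: the limit at the pole $t\to+\infty$ is a universal constant fixed by the normalisation of $G$, while the limit at infinity $t\to 0^+$ is controlled by $\AVR(g)$. The Ricci--pinching $\Ric\geqslant\varepsilon\,\sca\,g$ then enters to upgrade the monotonicity to a rigid statement: through the Bochner formula $\tfrac12\Delta\abs{\nabla G}^2=\abs{\nabla^2 G}^2+\Ric(\nabla G,\nabla G)\geqslant \varepsilon\,\sca\,\abs{\nabla G}^2$ it produces an integral control of $\sca$ against $\abs{\nabla G}^2$ which, balanced against the fixed budget of the monotone quantity, forces $\sca\equiv 0$. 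In dimension three the full curvature tensor is algebraically determined by $\Ric$, so $\sca\equiv 0$ gives $\Ric\equiv 0$ and $(M,g)$ is flat.

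It remains to rule out the collapsed case $\AVR(g)=0$, and here I would invoke the Ricci flow reduction of Lott~\cite{lott_2024} and Lee--Topping~\cite{lee_topping_2022}. Their analysis of the long--time behaviour of the flow, which preserves the pinching condition and expands the manifold, reduces the problem to the Euclidean regime: a non--compact Ricci--pinched $3$--manifold which is \emph{not} flat must in fact enjoy Euclidean volume growth. Thus a hypothetical non--flat, non--compact example would fall under the case already settled above, a contradiction. Combining the two cases shows that every non--compact Ricci--pinched $3$--manifold is flat, that is, $(M,g)$ is flat or compact.

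The step I expect to be the main obstacle is the rigidity within the Euclidean case: proving that the monotone potential--theoretic quantity is genuinely \emph{constant}, rather than merely bounded, and then extracting from its constancy --- via the equality case of the Kato/Bochner inequality combined with the pinching --- the vanishing of $\sca$. The delicate analytic points are the control of the level sets of $G$ towards the possibly irregular ends and the rigorous identification of the limit at infinity with $\AVR(g)$; the pinching hypothesis is precisely what converts the monotonicity inequality into the sharp rigidity that yields flatness.
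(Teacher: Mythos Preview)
Your overall skeleton --- split into $\AVR>0$ and $\AVR=0$, settle the former by potential theory and defer the latter to the Ricci--flow literature --- matches the paper's posture. Be aware, though, that the paper does \emph{not} prove \Cref{thmpinchingtheorem} in full: it states it, credits Lott, Deruelle--Schulze--Simon and Lee--Topping for the complete proof, and then supplies its own argument only for the weaker \Cref{thm:main-2} (superquadratic growth, in particular $\AVR>0$). So your $\AVR=0$ paragraph is, like the paper's, essentially a citation; the claim that the Ricci--flow works ``reduce the problem to the Euclidean regime'' is your gloss, not something the paper establishes.

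On the Euclidean case your proposal is in the right spirit but the mechanism differs from the paper's and has a gap precisely where you anticipate one. The paper does not use the Green's function with a pole and does not try to prove a monotone quantity is \emph{constant} and then extract flatness from an equality/Kato case. It argues by contradiction: if $\sca(o)>0$ somewhere, a small geodesic sphere $\partial B_r(o)$ has $\int\H^2<16\pi$; setting $\Omega=\overline{B}_r(o)$, solving the exterior problem $\Delta u=0$, $u|_{\partial\Omega}=1$, $u\to0$ at infinity, and writing $w=-\log u$, the quantity $\MF(t)=\int_{\{w=t\}}(\H|\nabla w|-|\nabla w|^2)\dif\mu$ is nonincreasing with $\MF(0)<4\pi$. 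The pinching enters not merely through Bochner but through \emph{Gauss--Bonnet on the level surfaces}, yielding the differential inequality $\MF'(t)\leqslant\max\{-2\MF(t),\,\varepsilon(2\MF(t)-8\pi)\}$, which forces $\MF(t)\leqslant C\ee^{-2t}$ for large $t$. Combining this exponential decay with the capacity identity $\ncapa_2(\partial\Omega_t)=\ee^{t}\ncapa_2(\partial\Omega)$, H\"older, and the Li--Yau bound $u(x)\leqslant C\,d(x,o)^{1-\alpha}$ gives a growth estimate of the form $\ee^{7t}\lesssim \ee^{\frac{1+\alpha}{\alpha-1}t}$, a contradiction once $\alpha>4/3$ (hence in particular for $\AVR>0$, i.e.\ $\alpha=2$). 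Your proposed route --- bound $\int\sca\,|\nabla G|^2$ by the finite ``budget'' of a monotone quantity and conclude $\sca\equiv0$ --- does not close as stated, since a finite weighted integral of $\sca$ does not force pointwise vanishing; the paper's point is exactly that no equality--case rigidity is needed, only a quantitative decay--versus--growth contradiction.
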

Notice that being flat or compact is not mutually exclusive, consider for instance a flat $3$--torus. This result is a generalization of the well--known {\em Myers's diameter estimate}: if $(M,g)$ is a complete and connected $n$--dimensional Riemannian manifold such that $\Ric\geqslant (n-1)k^2g$, for some constant $k>0$, then $M$ is compact and $\mathrm{diam}(M,g)\leqslant\pi/k$. Richard~Hamilton conjectured \cref{thmpinchingtheorem}, possibly taking inspiration from its extrinsic counterpart that he proved for hypersurfaces of the Euclidean space~\cite{hamilton_1994}.

\begin{theorem}
Let $M$ be a smooth, strictly convex, complete hypersurface  in $\R^{n}$. 
If the second fundamental form of $M$ is {\em pinched}, in the sense that there exists $\varepsilon>0$ such that
\begin{equation}
h_{ij}\geqslant\varepsilon\H g_{ij},
\end{equation}
where $ g_{ij}$ is the induced Riemannian metric, then $M$ is compact. 
\end{theorem}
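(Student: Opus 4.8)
The plan is to argue by contradiction, assuming $M$ is non--compact, and to show that the pinching condition is geometrically incompatible with the way a complete, non--compact convex hypersurface must open up at infinity. First I would record the algebraic meaning of the hypothesis. Writing $\kappa_1,\dots,\kappa_{n-1}>0$ for the principal curvatures (positivity is guaranteed by strict convexity) and $\H=\sum_i\kappa_i$ for the mean curvature, the inequality $h_{ij}\ge\varepsilon\H g_{ij}$ is equivalent to $\kappa_i\ge\varepsilon\H$ for every $i$. Since $\H\ge\kappa_{\max}$, this gives the uniform comparability $\kappa_{\min}\ge\varepsilon\,\kappa_{\max}$ at every point, i.e. the Weingarten operator has condition number bounded by $1/\varepsilon$. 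The crucial structural feature is that this condition is invariant under the dilations $x\mapsto\lambda x$, because both $\kappa_{\min}$ and $\kappa_{\max}$ scale by $\lambda^{-1}$.

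Next I would fix the global picture through convexity. A complete, strictly convex hypersurface bounds a convex body $\Omega$, and $M$ is compact precisely when $\Omega$ is bounded; under the contradiction hypothesis $\Omega$ is therefore unbounded and its recession cone contains a ray in some direction $v$. Consequently the outward Gauss map $\nu\colon M\to\S^{n-1}$ is a diffeomorphism onto a proper open subset of the sphere, lying in the hemisphere $\{\langle\,\cdot\,,v\rangle\le 0\}$, and along any exhausting sequence $p_k\to\infty$ the hypersurface spreads out in the direction $v$.

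The decisive step is to quantify this spreading and contradict comparability. The model to keep in mind is the paraboloid $x_n=\tfrac12|x'|^2$: computing the eigenvalues of $g^{-1}h$ at distance $R$ from the axis gives a radial principal curvature of order $(1+R^2)^{-3/2}$ against transverse ones of order $(1+R^2)^{-1/2}$, so that $\kappa_{\min}/\kappa_{\max}\sim(1+R^2)^{-1}\to 0$; the paraboloid is thus not pinched, consistently with the theorem. The point is that such degeneration is forced. In the graph representation $x_n=f(x')$ the induced metric becomes strongly anisotropic as $|\nabla f|\to\infty$, its largest eigenvalue being $1+|\nabla f|^2$ in the direction of $\nabla f$, and this makes the principal curvature along $v$ decay strictly faster than the transverse ones, unless the whole second fundamental form already degenerates. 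I would make this rigorous by a blow--down argument: by scale invariance every rescaling $\lambda M$ is pinched, and the pinching furnishes the uniform curvature estimates needed to promote the Hausdorff convergence $\lambda\Omega\to C_\infty$ to smooth convergence of the boundaries away from the vertex. The limit is then a smooth, pinched hypersurface invariant under dilations, that is, a cone; but the boundary of a cone is ruled by its radial lines and so carries a vanishing principal curvature, contradicting $\kappa_{\min}\ge\varepsilon\,\kappa_{\max}>0$.

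I expect the main obstacle to be exactly this last passage: securing the curvature estimates at infinity that rule out the paraboloid--type collapse and guarantee a genuine, non--degenerate smooth blow--down limit. This is where the pinching hypothesis must enter in an essential way, both to prevent the transverse curvatures from running away relative to the radial direction and to ensure that the rescalings retain a smooth limiting hypersurface rather than collapsing onto a lower--dimensional set.
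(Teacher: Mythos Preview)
The paper does not contain a proof of this theorem. It is stated only as background, attributed to Hamilton~\cite{hamilton_1994}, and serves to motivate the intrinsic Ricci--pinching conjecture that is the paper's actual subject. There is therefore no ``paper's own proof'' against which to compare your attempt.

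As for your proposal on its own merits: the strategy is a plausible one, and you correctly identify its weak point. The blow--down argument hinges on upgrading Hausdorff convergence of the rescaled convex bodies to smooth convergence of their boundaries, and for that you need a priori curvature bounds that do not follow from the pinching alone (pinching controls ratios, not sizes). You also need to exclude degenerate limits: if the recession cone of $\Omega$ is a half--space, the blow--down boundary is a hyperplane with $\kappa_{\max}=0$, and the inequality $\kappa_{\min}\ge\varepsilon\kappa_{\max}$ is vacuous there, so no contradiction arises at that stage. A cleaner route, closer to Hamilton's original argument, is to work with the Gauss map: strict convexity makes $\nu$ a diffeomorphism onto an open subset $U\subset\S^{n-1}$, and the pinching bounds the eccentricity of $\dd\nu$ uniformly; one then shows directly that $U$ must be the whole sphere (for instance via the support function or by showing the Gauss image cannot have boundary), forcing compactness without any limiting procedure.
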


A first step towards the proof of \cref{thmpinchingtheorem} was done by Chen and Zhu~\cite{chenzhu} who proved, employing the Ricci flow, that a $3$--dimensional, complete and non--compact Riemannian manifold, with bounded and nonnegative sectional curvature, which is Ricci--pinched is flat. Then, Lott~\cite{lott_2024} improved their result, requiring milder assumptions on the sectional curvature, and Deruelle--Schulze--Simon~\cite{deruelle_schulze_simon_2022} showed that the conjecture is true if the curvature is bounded.
Finally, Lee and Topping~\cite{lee_topping_2022} removed the bounded curvature assumption. All these results employ the Ricci flow. We mention that higher--dimensional versions of Hamilton’s conjecture were proven by Ma and Cheng in~\cite{Ma_Cheng} and by Deruelle--Schulze--Simon~\cite{deruelle_schulze_simon_2024} (see also~\cite{chan_lee_peachey_2024}).

In this short note, we give an alternative, direct, and mostly self--contained proof of a weaker version of \cref{thmpinchingtheorem}.

The {\em asymptotic volume ratio} of $(M,g)$ is defined as 
\begin{equation}\label{avreq}
\AVR=\frac{3}{4\pi}\lim_{r\to +\infty}\frac{\mathrm{Vol}(B_r(p))}{r^3},
\end{equation}
for any point $p\in M$. When $\Ric\geqslant0$, thanks to the Bishop--Gromov theorem, the quantity $\AVR$ is well--defined and independent of the point $p\in M$. Moreover, $\AVR\in [0,1]$ and $\AVR=1$ if and only if the manifold is $\R^3$ endowed with the Euclidean metric.

\begin{theorem}\label{thm:main}
Let $(M, g)$ be a complete, connected, non--compact, Ricci--pinched Riemannian $3$--manifold. Suppose that $\AVR>0$, then $(M,g)$ is flat.
\end{theorem}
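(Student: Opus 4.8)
The plan is to substitute the Ricci--flow and inverse--mean--curvature--flow techniques with a monotonicity formula along the level sets of a Green's function, following the potential--theoretic proofs of sharp geometric inequalities under $\Ric\geqslant 0$. Since $\Ric\geqslant 0$ and $\AVR>0$, the manifold is non--parabolic, so I would first fix a pole $p\in M$ and take the minimal positive Green's function $G$, normalised so that $G\sim (4\pi\, d(\cdot,p))^{-1}$ near $p$ and $G\to 0$ at infinity. By Sard's theorem almost every level is regular, and the level sets $\Sigma_t=\{G=t\}$ are smooth closed surfaces sweeping out $M\setminus\{p\}$; for $t$ large they are small, nearly round geodesic spheres with $\chi(\Sigma_t)=2$.

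Writing $\nu=\nabla G/|\nabla G|$ and denoting by $H$ and $A$ the mean curvature and second fundamental form of $\Sigma=\Sigma_t$, the Bochner identity for the harmonic function $G$, the decomposition of $|\nabla^2 G|^2$ into tangential, mixed and normal parts, and the Gauss equation $2K_{\Sigma}=\mathrm{R}-2\Ric(\nu,\nu)+H^2-|A|^2$ combine into
\[ \Delta\,|\nabla G|=|\nabla G|\,\Ric(\nu,\nu)+|\nabla G|\,|A|^2+\frac{\bigl|\nabla^{\Sigma}|\nabla G|\bigr|^2}{|\nabla G|}. \]
Integrating this identity over the region bounded by two level sets --- after a scale--invariant normalisation chosen so that the boundary terms stay finite as $t\to+\infty$ --- yields a Willmore/Hawking--type functional $\mathcal{W}(t)$, morally a reparametrisation of $\int_{\Sigma_t}H^2\,\dif\sigma$, whose monotonicity in $t$ is guaranteed by $\Ric\geqslant0$. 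The Gauss--Bonnet theorem $\int_{\Sigma_t}K_{\Sigma_t}\,\dif\sigma=4\pi$ and the roundness of the infinitesimal spheres at the pole give the Euclidean value $\mathcal{W}=16\pi$ in the limit $t\to+\infty$.

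The genuinely new ingredient is the pinching. Inserting $\Ric(\nu,\nu)\geqslant \varepsilon\,\mathrm{R}\geqslant 0$ into the integrated identity promotes the monotonicity to a quantitative one, in which the total variation of $\mathcal{W}$ dominates $\varepsilon\int_M \mathrm{R}\,|\nabla G|\,\dif V$ together with the non--negative shape contributions $\int_M |A|^2\,|\nabla G|\,\dif V$ and $\int_M \bigl|\nabla^{\Sigma}|\nabla G|\bigr|^2\,|\nabla G|^{-1}\,\dif V$. The remaining endpoint, as $t\to 0^+$, is read off from $\AVR>0$: under $\Ric\geqslant 0$ and Euclidean volume growth the distant level sets open up like the cross--sections of the tangent cone at infinity, which is a metric cone. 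On any metric cone the radial direction is a zero eigenvalue of $\Ric$, so the scale--invariant pinching $\Ric\geqslant\varepsilon\,\mathrm{R} g$ forces $\mathrm{R}\equiv 0$ and hence, since $\Ric\geqslant 0$, $\Ric\equiv 0$ on the model; the asymptotic cone is therefore flat with a round cross--section, so $\AVR=1$. Feeding $\AVR=1$ back, the value of $\mathcal{W}$ at infinity equals the pole value $16\pi$, and monotonicity forces $\mathcal{W}\equiv 16\pi$. Its equality case makes every term in the displayed identity vanish, giving $|A|\equiv 0$, $\Ric(\nu,\nu)\equiv 0$ and $\mathrm{R}\equiv 0$; thus $\Ric\equiv 0$ and, in dimension three, $\Riem\equiv 0$, so $(M,g)$ is flat. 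Alternatively, $\AVR=1$ together with the Bishop--Gromov rigidity theorem identifies $(M,g)$ with flat $\R^3$ directly.

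I expect the crux to be exactly the endpoint analysis at infinity together with the rigidity, because the hypothesis is only $\AVR>0$ and not smooth asymptotic flatness: the level sets need not converge in a pointwise sense, so both the limiting value of $\mathcal{W}$ and the conical structure of the model must be extracted from the integral (monotonicity) estimates rather than from classical convergence, and one must justify that the scale--invariant pinching condition survives the blow--down. Secondary technical points I would need to settle are the control of the topology of $\Sigma_t$ (connectedness and the bound $\chi(\Sigma_t)\leqslant 2$ on the relevant components), the integrability of the remainder that allows the differential monotonicity to be integrated from the pole all the way to infinity, and the precise scale--invariant normalisation rendering both boundary contributions finite.
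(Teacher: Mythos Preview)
Your proposal has a genuine gap, and it is precisely the step you flagged as the crux. The entire argument hinges on showing that the tangent cone at infinity is flat, hence $\AVR=1$; but the tangent cone is only a metric cone in the Gromov--Hausdorff sense, and a pointwise curvature pinching $\Ric\geqslant\varepsilon\,\mathrm{R}\,g$ does not pass to a GH blow--down without curvature bounds --- which is exactly the extra hypothesis this theorem is meant to avoid. The sentence ``on any metric cone the radial direction is a zero eigenvalue of $\Ric$'' presupposes a smooth Riemannian cone; the actual limit need not be smooth, and even on its regular part the Ricci tensor of the limit is not the limit of the Ricci tensors. Once you notice this, you also see that the potential--theoretic machinery is not doing any work in your outline: if you could legitimately conclude $\AVR=1$, Bishop--Gromov rigidity finishes immediately, and the Green's function monotonicity is decorative.

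The paper's argument is structurally different and never passes to a limit. It argues by contradiction: if $(M,g)$ is not flat, pick $o$ with $\mathrm{R}(o)>0$, so a small geodesic sphere $\partial B_r(o)$ satisfies $\int \H^2<16\pi$. With $w=-\log u$ for the harmonic potential $u$ of $\overline{B}_r(o)$, the functional $\MF(t)=\int_{\{w=t\}}(\H|\nabla w|-|\nabla w|^2)\,\dif\mu$ is monotone nonincreasing. The pinching is used \emph{on each level set} via Gauss--Codazzi and Gauss--Bonnet: for a genus--zero component one gets $2\!\int\Ric(\nu,\nu)\geqslant\varepsilon(16\pi-\int\H^2)$, and combining the cases yields the differential inequality $\MF'\leqslant\max\{-2\MF,\ \varepsilon(2\MF-8\pi)\}$. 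Since $\MF(0)<4\pi$, this forces $\MF(t)\leqslant C\,\ee^{-2t}$. A H\"older/capacity estimate then shows $\mathrm{Vol}(\{w\leqslant t\})$ grows at least like $\ee^{7t}$, contradicting the at--most--cubic volume growth coming from $\AVR>0$. The pinching thus enters as an ODE mechanism on the actual manifold, not as a condition to be transported to an asymptotic model.
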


We will get \cref{thm:main} as a consequence of a slightly more general result, where the assumption $\AVR>0$ is replaced by another condition on the asymptotic volume growth. We say that $(M,g)$ has {\em superquadratic volume growth} if there exist a point $p\in M$ and two constants $\kst_{\mathrm{vol}}> 0$ and $\alpha\in (1,2]$ such that, for sufficiently large $r$, 
\begin{equation}\label{superquadratic}
\kst_{\mathrm{vol}}^{-1}\, r^{1+\alpha}\leqslant{\mathrm{Vol}}(B_r(q))\leqslant \kst_{\mathrm{vol}}\, r^{1+\alpha}.
\end{equation}

\begin{theorem}\label{thm:main-2}
Let $(M, g)$ be a complete, connected, non--compact, Ricci--pinched Riemannian $3$--manifold. Suppose that $(M,g)$ has {\em superquadratic volume growth} with $\alpha>4/3$ in \cref{superquadratic}, then $(M,g)$ is flat.
\end{theorem}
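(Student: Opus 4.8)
The plan is to reduce the statement to the vanishing of the scalar curvature and then to run a potential--theoretic monotonicity argument along the level sets of the Green's function. First, observe that it suffices to prove $\mathrm R\equiv0$: since $(M,g)$ is Ricci--pinched we have $\Ric\geqslant0$ and $\tr\Ric=\mathrm R$, so $\mathrm R\equiv0$ forces $\Ric\equiv0$, and in dimension three the full Riemann tensor is algebraically determined by the Ricci tensor, whence $(M,g)$ is flat. Next I would exploit the volume growth: the superquadratic bound \eqref{superquadratic} with $\alpha>1$ makes the Li--Yau integral test $\int^{\infty} r\,\mathrm{Vol}(B_r)^{-1}\dif r<+\infty$ hold, so $(M,g)$ is non--parabolic and carries a minimal positive Green's function $G$ with pole at $p$, normalised Euclideanly near $p$ and decaying at infinity. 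Combined with the Li--Yau estimates, the two--sided bound \eqref{superquadratic} yields the sharp asymptotics $G\approx d(p,x)^{1-\alpha}$ and $|\nabla G|\approx d(p,x)^{-\alpha}$, which will be the quantitative input at infinity.

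The heart of the argument is a monotonicity formula for a geometric functional $F(t)$ built from the level sets $\Sigma_t=\{G=t\}$, of Willmore--Hawking type as in the linear potential--theoretic approach to mass and Penrose inequalities. The key computation combines the Bochner formula for $G$ with the Gauss equation on the surfaces $\Sigma_t$, which in dimension three reads $K_{\Sigma_t}=\tfrac12\mathrm R-\Ric(\nu,\nu)+\det\mathrm{II}$ with $\nu=\nabla G/|\nabla G|$, together with the Gauss--Bonnet theorem $\int_{\Sigma_t}K_{\Sigma_t}\dif\sigma=2\pi\chi(\Sigma_t)\leqslant4\pi$ on each (eventually connected) level surface. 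A careful bookkeeping yields a monotone $F$ whose defect is bounded below by the nonnegative umbilicity term $\int_{\Sigma_t}|\ringg{\mathrm{II}}|^2\,|\nabla G|\dif\sigma$ plus a curvature term $c\int_{\Sigma_t}\Ric(\nu,\nu)\,|\nabla G|\dif\sigma$. Here the Ricci pinching enters decisively: since $\Ric(\nu,\nu)\geqslant\varepsilon\mathrm R$, the monotonicity defect controls $\varepsilon\,c\int_{\Sigma_t}\mathrm R\,|\nabla G|\dif\sigma$ from below.

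I would then analyse the two endpoints of $F$. As $t\to+\infty$ the level sets concentrate at $p$ and $F(t)$ tends to the Euclidean normalisation constant; as $t\to0^+$ the level sets escape to infinity, and I would estimate $F(t)$, and in particular the associated boundary term, through the area bound $|\Sigma_t|\approx r^{\alpha}$ and the gradient estimate $|\nabla G|\approx r^{-\alpha}$ coming from \eqref{superquadratic}. It is exactly here that the threshold $\alpha>4/3$ appears: it is the sharp condition forcing the boundary contribution at infinity to vanish in the limit, so that the endpoint value of $F$ becomes computable and is pinned to the Euclidean constant. Monotonicity then forces $F$ to be constant, hence the whole defect to vanish, so that $\int_{\Sigma_t}\mathrm R\,|\nabla G|\dif\sigma\equiv0$; since the critical set of $G$ is negligible and $\mathrm R$ is continuous, this gives $\mathrm R\equiv0$ on $M$, and the reduction of the first paragraph concludes that $(M,g)$ is flat.

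The main obstacle is precisely this far--field analysis: producing the monotone functional $F$ whose defect is genuinely controlled by $\int_{\Sigma_t}\mathrm R\,|\nabla G|$, and then proving that its boundary term at infinity dies under the hypothesis $\alpha>4/3$. This is where the sharp Li--Yau asymptotics for $G$ and $|\nabla G|$, the control of the second fundamental form and of the topology of the far level surfaces, and the exact matching of the scaling exponents all have to be combined. In particular, the genuinely delicate point, which the volume--growth threshold is designed to secure, is the upgrade from the mere finiteness of the weighted total scalar curvature to its pointwise vanishing.
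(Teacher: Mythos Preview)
Your proposal assembles the right ingredients --- a harmonic/Green potential, a Willmore--type monotone quantity along its level sets, Gauss--Bonnet on the leaves, and the pinching $\Ric(\nu,\nu)\geqslant\varepsilon\mathrm R$ --- but the overall \emph{strategy} has a gap that you yourself flag in the last paragraph. You aim for a rigidity argument: match the two endpoint values of $F$, conclude $F$ is constant, force the defect (hence $\int\mathrm R$) to vanish. The problem is that for $\alpha<2$ the manifold is \emph{not} asymptotically Euclidean, so there is no reason for $F(0^+)$ to equal the Euclidean constant you see at the pole; the level surfaces at infinity have the wrong area/gradient scaling, and the ``boundary term vanishes for $\alpha>4/3$'' claim is neither stated precisely nor justified. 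And even if you could compute both limits, what you would obtain is a bound on a weighted integral of $\mathrm R$, not its pointwise vanishing; you have no mechanism to upgrade finiteness to $\mathrm R\equiv0$.

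The paper avoids this entirely by running a \emph{contradiction} argument rather than a rigidity one, and by using the pinching \emph{dynamically} rather than only as a lower bound on the defect. One assumes $\mathrm R(o)>0$ at some point, so a small geodesic sphere has Willmore energy $<16\pi$ and hence the monotone quantity $\MF$ starts strictly below $4\pi$. The key step you are missing is a differential inequality: combining Gauss--Bonnet on each component with the pinching yields
\[
\MF'(t)\ \leqslant\ \max\bigl\{-2\MF(t),\ \varepsilon\bigl(2\MF(t)-8\pi\bigr)\bigr\},
\]
which, once $\MF<4\pi$, forces the exponential decay $\MF(t)\leqslant C\ee^{-2t}$. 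This decay (and the companion bound $\MG(t)\leqslant\MF(t)$) is then fed into a capacity/H\"older estimate to show that the sublevel sets must have volume $\gtrsim \ee^{7t}$. On the other hand the Li--Yau bound $u\leqslant C\,d^{\,1-\alpha}$ together with the \emph{upper} volume growth gives volume $\lesssim \ee^{\frac{\alpha+1}{\alpha-1}t}$. These are incompatible precisely when $7>\tfrac{\alpha+1}{\alpha-1}$, i.e.\ $\alpha>4/3$. So the threshold does not arise from a vanishing boundary term at infinity, but from the competition between the forced $\ee^{-2t}$ decay of $\MF$ and the imposed polynomial volume growth.
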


Condition~\eqref{superquadratic} holding with $\alpha=2$ is equivalent to $\AVR>0$, hence \cref{thm:main} is a special case of \cref{thm:main-2}. We mention that  \cref{thm:main-2} is contained in the paper of Deruelle--Schulze--Simon~\cite[Theorem 1.3]{deruelle_schulze_simon_2022} and has been proved also by Huisken--K\"{o}erber using the {\em inverse mean curvature flow}~\cite{huisken_koerber_24}. Our proof in the next sections avoids the existence and regularity theory for the inverse mean curvature flow~\cite{heidusch_phdthesis_2001,huisken_inversemeancurvatureflow_2001}, being replaced with the more widely known potential theory. At the end of the paper, we also show an application to manifolds with boundary.

\section{Proof of \texorpdfstring{\cref{thm:main-2}}{}}

Let $(M, g)$ be a complete, connected, non--compact, orientable, Ricci--pinched Riemannian $3$--manifold. We suppose by contradiction that $(M,g)$ is not flat, then there must exist a point $o\in M$ with $\sca(o)>0$. As a consequence, by considering the asymptotic expansion of the surface area and the mean curvature $\H$ of the small spheres $\partial B_{r}(o)$, as $r\to 0$, there exists a radius $r\ll 1$ such that $\partial B_{r}(o)$ is a smooth surface and
\begin{equation}\label{F(0)}
\int_{\partial B_{r}(o)}\H^2\dif\mu<16\pi,
\end{equation} 
see for instance~\cite[Theorem 3.2]{fan_shi_tam_2009}.\\
We then set $\Omega= \overline{B}_{r}(o)$ and we define the function $w$ as the solution of the elliptic problem
\begin{equation}\label{eq:moser_2_potential}
\begin{cases}
\Delta w& =&\abs{\nabla w }^2 &\text{on $ M\setminus\Omega $}\\
w&=&0&\text{on $\partial\Omega$}\\
w&\to&+\infty &\text{as $ d(x,o)\to+\infty$}
\end{cases}
\end{equation}
The existence and regularity of such a solution are granted by the classical theory of harmonic functions. Consider indeed the following problem
\begin{equation}\label{eq:harmonic_potential}
\begin{cases}
\Delta u& =& 0 &\text{on $ M\setminus\Omega $}\\
u&=&1&\text{on $\partial\Omega$}\\
u&\to&0 &\text{as $ d(x,o)\to+\infty$}
\end{cases}
\end{equation}
and assume that $(M,g)$ has superquadratic volume growth, that is condition~\eqref{superquadratic} holds. Then, if $\Omega\subseteq M$ is a regular domain, problem~\cref{eq:harmonic_potential} admits a unique solution $u\in C^\infty(M\setminus\ringg\Omega)$ which takes values in $(0,1]$ and it is smooth till the boundary (see the papers by Varopoulos~\cite{Varopoulos}, Li--Yau~\cite{Li-Yau86} and Agostiniani--Fogagnolo--Mazzieri~\cite{agostiniani_sharpgeometricinequalitiesclosed_2020}). Then, $w=-\log u$ is a smooth solution of problem~\eqref{eq:moser_2_potential}.

Let $\Omega_t=\{w\leqslant t\}\cup \Omega$. We define the following function
$\MF$ at every regular value $t\in[0,+\infty)$ of $w$ solution of problem~\eqref{eq:moser_2_potential}, as
$$
\MF(t)= \int_{\partial\Omega_t}\H\vert\nabla w\vert -\vert\nabla w\vert^2\dif\mu\label{eq:F},
$$
where $\H$ denotes the mean curvature with respect to the outward pointing unit normal $\nu=\nabla w/|\nabla w|$ and $\mu$ is the surface measure of the level set $\partial\Omega_t=\{w=t\}$. By Sard theorem the set of critical values of $w$ has zero Lebesgue measure, hence the function $\MF$ is then well defined almost everywhere in $[0,+\infty)$.\\
Notice that, by simply expanding the square in $\left(\H\!/2-\vert\nabla w\vert\right)^2\geqslant 0$, we have
\begin{equation}\label{FHdis}
\MF(t)=\int_{\partial\Omega_t}\H\vert\nabla w\vert -\vert\nabla w\vert^2\dif\mu\leqslant\int_{\partial\Omega_t}\H^2\!\!/4\,\dif\mu.
\end{equation}
In particular, being $\partial\Omega_0=\partial B_r(o)$ a regular level set of $w$, we have $\MF(0)<4\pi$, by equation~\eqref{F(0)}.

The following lemma is in the spirit of similar results in~\cite{agostiniani_sharpgeometricinequalitiesclosed_2020, AMO24}.

\begin{lemma}\label{monotonicity}
The function $\MF$ admits a locally absolutely continuous, nonincreasing extension (still denoted by $\MF$) to the whole $[0,+\infty)$. Moreover, 
at the regular values of $w$, there holds
\begin{equation}\label{explicit-derivative}
\MF'(t)=\,-\,\int_{\partial\Omega_t}\Bigg[
\frac{\abs{\nabla^\top\abs{\nabla w}}^2}{\abs{\nabla w}^2} +\Ric(\nu,\nu)+\vert{\ringg{\h}}\vert^2 +\frac{1}{2}\left({\H} -2\abs{\nabla w}\right)^2 \Bigg]\,\dif\mu\,\leqslant\, 0,
\end{equation}
where $\nu=\nabla w/|\nabla w|$ and $\h$ are the outward pointing unit normal and the second fundamental form of $\partial\Omega_t$, $\ringg{\h}$ the traceless part of $\h$ and $\nabla^\top$ denotes the tangential part of the gradient (with respect to $\partial\Omega_t$).
\end{lemma}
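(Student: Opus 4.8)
The plan is to prove the differential identity~\eqref{explicit-derivative} first at the regular values of $w$, where $\partial\Omega_t=\{w=t\}$ is a smooth closed surface and $v:=\abs{\nabla w}$ does not vanish, and then to upgrade it to monotonicity and local absolute continuity on all of $[0,+\infty)$. Writing $\nu=\nabla w/v$, I would start from the two structural facts available on a regular level set: the tangential--normal splitting $\Delta w=\nabla^2 w(\nu,\nu)+\H\,v$ of the ambient Laplacian (using $\partial_\nu w=v$), combined with the equation $\Delta w=v^2$ from problem~\eqref{eq:moser_2_potential}. Together these give the basic relation
\[
\partial_\nu v=\nabla^2 w(\nu,\nu)=v^2-\H v ,
\]
equivalently $\H v-v^2=-\partial_\nu v$, which already rewrites the integrand of $\MF$ as $-\partial_\nu v$ and will be used repeatedly.

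For the derivative at a regular value $t$, I would regard the family $\{\partial\Omega_\tau\}$ as the normal flow of $\partial\Omega_t$ with speed $f=1/v$, so that the first variation of the weighted area yields
\[
\MF'(t)=\int_{\partial\Omega_t}\Big[\tfrac{1}{v}\,\partial_\nu\big(\H v-v^2\big)+\tfrac{\H}{v}\big(\H v-v^2\big)\Big]\dif\mu .
\]
Expanding the normal derivative and inserting $\partial_\nu v=v^2-\H v$ to eliminate every occurrence of $\partial_\nu v$, the purely algebraic terms collapse and leave the compact expression $\MF'(t)=\int_{\partial\Omega_t}\big(\partial_\nu\H+2\H v-2v^2\big)\dif\mu$.

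The only genuine curvature input then needed is the evolution of the mean curvature under a normal variation of speed $f$, namely $f\,\partial_\nu\H=-\Delta_{\partial\Omega_t}f-\big(\abs{\h}^2+\Ric(\nu,\nu)\big)f$. Taking $f=1/v$, multiplying by $v$, and integrating over the closed surface $\partial\Omega_t$ turns the tangential Laplacian term into a boundaryless divergence; an integration by parts then produces the gradient term, giving
\[
\int_{\partial\Omega_t}\partial_\nu\H\dif\mu=-\int_{\partial\Omega_t}\Big[\tfrac{\abs{\nabla^\top v}^2}{v^2}+\abs{\h}^2+\Ric(\nu,\nu)\Big]\dif\mu .
\]
Feeding this back into the compact expression and splitting $\abs{\h}^2=\abs{\ringg\h}^2+\tfrac12\H^2$ (valid because $\partial\Omega_t$ is a surface), the surviving scalar terms recombine as $-\tfrac12\H^2+2\H v-2v^2=-\tfrac12(\H-2v)^2$, and one recovers exactly the four--term integrand of~\eqref{explicit-derivative}. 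Each of those four terms is nonnegative --- and this is the only point where the hypothesis enters, through $\Ric\geqslant 0\Rightarrow\Ric(\nu,\nu)\geqslant 0$ --- so $\MF'(t)\leqslant 0$ at every regular value.

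Finally, to pass to all of $[0,+\infty)$, I would apply the divergence theorem to $\H v-v^2=-\partial_\nu v=-\langle\nabla v,\nu\rangle$: for regular values $s<t$ this gives the bulk representation $\MF(t)-\MF(s)=-\int_{\{s\leqslant w\leqslant t\}}\Delta v\dif V_g$, whose coarea disintegration reproduces the pointwise derivative just computed and which is manifestly absolutely continuous in $t$. Combined with the sign $\MF'\leqslant 0$, this furnishes the claimed nonincreasing, locally absolutely continuous extension. I expect the real difficulty to lie precisely here: the first--variation computation is licit only on the open set of regular values, so controlling $\MF$ across critical values --- where $\partial\Omega_t$ degenerates and $v$ vanishes --- requires the measure--theoretic facts that the critical set $\{\nabla w=0\}$ is negligible and that $\Delta v$ stays integrable near it, rather than any additional geometric identity.
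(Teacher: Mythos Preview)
Your approach coincides with the paper's: both rewrite the integrand as $\H v-v^2=-\partial_\nu|\nabla w|$, pass to the bulk integral of $-\Delta|\nabla w|$ via the divergence theorem, and identify that quantity with the four--term expression in~\eqref{explicit-derivative} (you via the normal evolution of $\H$, the paper ``by direct computation'' of $\mathrm{div}(\nabla|\nabla w|)$; these are the same calculation organised differently).

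The one point where the paper is more concrete is precisely the one you flag as the real difficulty. Your bulk identity $\MF(t)-\MF(s)=-\int_{\{s\leqslant w\leqslant t\}}\Delta v\dif V_g$ is not justified as written when the slab contains critical points, since $\nabla|\nabla w|$ is singular on $\{\nabla w=0\}$ and you have not established that $\Delta|\nabla w|$ is integrable across it; appealing to ``$\Delta v$ stays integrable'' is exactly what needs proof. The paper's remedy is not a measure--theoretic integrability argument but a cutoff: one applies the divergence theorem to the smooth vector fields $\eta(|\nabla w|)\,\nabla|\nabla w|$, which have nonnegative divergence, and then lets the cutoff go to the identity (as in~\cite{agostiniani_sharpgeometricinequalitiesclosed_2020, AMO24}); monotone convergence then yields both $\MF(s)\geqslant\MF(t)$ across critical values and the $W^{1,1}_{\mathrm{loc}}$ regularity of $\MF$.
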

\begin{proof}
At every regular value $t\in[0,+\infty)$ of $w$, it is straightforward to see that
\begin{equation}
\MF(t)=-\int_{\partial\Omega_t}\bigg\langle\nabla\abs{\nabla w}\,,\frac{\nabla w}{\abs{\nabla w}}\bigg\rangle\,\dif\mu,\qquad\text{hence}\qquad \MF(t)-\MF(s)=-\int_{\{s<w<t\}}\mathrm{div}\,(\nabla\abs{\nabla w})\,\dif\mu,
\end{equation}
(by the divergence theorem) for every pair of regular values $s<t$ of $w$ in $[0,+\infty)$ such that the open set $\{s<w<t\}$ has no critical points.\\
The vector field $\nabla\abs{\nabla w}$ is well defined and smooth outside the set of the critical points of $w$ and by direct computation, we get
$$
\mathrm{div}\,(\nabla\abs{\nabla w})=|\nabla w|\,\Bigg[
\frac{\abs{\nabla^\top\abs{\nabla w}}^2}{\abs{\nabla w}^2} +\Ric(\nu,\nu)+\vert{\ringg{\h}}\vert^2 +\frac{1}{2}\left({\H} -2\abs{\nabla w}\right)^2 \Bigg].
$$
If the open set $\{s<w<t\}$ does not contain critical points of $w$, then the inequality $\MF(s)-\MF(t)\geqslant 0$ follows and equation~\eqref{explicit-derivative} is immediate. If instead the open set $\{s<w<t\}$ contains some critical points, to obtain the same conclusion, one can use appropriate approximating vector fields $\eta(|\nabla w|)\nabla\abs{\nabla w}$, smooth on all $M\setminus\Omega$ and with nonnegative divergence, as in~\cite{agostiniani_sharpgeometricinequalitiesclosed_2020, AMO24}. Following such argument, one also gets that $\MF\in W^{1,1}_{\mathrm{loc}}(0,+\infty)$, with a weak derivative given almost everywhere by formula~\eqref{explicit-derivative}.
\end{proof}

\begin{lemma}\label{lem:limit}
There exists $\widetilde{t}\in [0,+\infty)$ such that for all $t\geqslant\widetilde{t}$, there holds $\MF(t)\leqslant\kst\ee^{-2t}$, for a positive constant $\kst$.
\end{lemma}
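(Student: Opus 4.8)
The plan is to reduce the claim to a decay estimate for the weighted area $V(t)=\int_{\partial\Omega_t}\abs{\nabla w}^2\dif\mu$, and then to produce that decay by bringing in the two hypotheses not yet used, namely the Ricci--pinching and the two--sided bound \eqref{superquadratic}. First I would record two exact identities. Since $u=\ee^{-w}$ is harmonic, its flux is constant across level sets, which gives $\int_{\partial\Omega_t}\abs{\nabla w}\dif\mu=\ncapa\,\ee^{t}$ with $\ncapa=\int_{\partial\Omega}\abs{\nabla u}\dif\mu$; and a first--variation computation on the level sets, using $\Delta w=\abs{\nabla w}^2$ and $\H=|\nabla w|-\partial_\nu\abs{\nabla w}/\abs{\nabla w}$, yields the clean relation $\MF(t)=V(t)-V'(t)$ at regular values. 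Together with $\abs{\nabla w}\to0$ (Cheng--Yau) and \cref{monotonicity}, this gives $V(t)=\ee^{t}\int_t^{\infty}\ee^{-\tau}\MF(\tau)\dif\tau$, so that $V$ is nonincreasing and $0\leqslant V(t)\leqslant\MF(t)$. In particular the bound $\MF(t)\leqslant\kst\,\ee^{-2t}$ is \emph{equivalent} to $V(t)\leqslant\kst\,\ee^{-2t}$, i.e. to $\int_{\{u=s\}}\abs{\nabla u}^2\dif\mu\leqslant\kst\,s^{4}$ as $s\to0^{+}$.

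The heart of the matter is thus to show that $V$ decays at least like $\ee^{-2t}$, and the plan is to derive a differential inequality forcing this. Keeping in \eqref{explicit-derivative} only the term $\Ric(\nu,\nu)\geqslant\varepsilon\sca\geqslant0$ and using the Gauss equation together with Gauss--Bonnet, $\int_{\partial\Omega_t}K\dif\mu=2\pi\chi(\partial\Omega_t)\leqslant4\pi$, converts the curvature integrals into $\int_{\partial\Omega_t}\H^2\dif\mu$ and $\int_{\partial\Omega_t}\sca\dif\mu$ and relates them to $\MF$ and $V$. I expect the pinching to be indispensable precisely here: under $\Ric\geqslant0$ alone the rate is genuinely slower, as one sees on the (non--pinched) rotationally symmetric ends with metric asymptotic to $\dif r^{2}+r^{\alpha}g_{\S^{2}}$, which have $\mathrm{Vol}(B_r)\simeq r^{1+\alpha}$ but satisfy only $\MF(t)\simeq\ee^{\frac{\alpha-2}{\alpha-1}t}$, decaying strictly slower than $\ee^{-2t}$ whenever $\alpha>4/3$.

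To close the estimate I would feed in the two--sided volume bound through the Li--Yau Green's function asymptotics $u\simeq d(o,\emptyarg)^{\,1-\alpha}$ and the Cheng--Yau gradient bound $\abs{\nabla w}\lesssim d(o,\emptyarg)^{-1}$, which place $\partial\Omega_t$ at distance $\simeq\ee^{t/(\alpha-1)}$ and control its area; the hypothesis $\alpha>4/3$ is exactly what makes the resulting exponent beat $\ee^{-2t}$ once the pinching has been used to suppress the anisotropic (large radial--Ricci, non--umbilic) behaviour responsible for the slow rate in the non--pinched models. Integrating the resulting inequality from a large $\widetilde t$ then gives $V(t)\leqslant\kst\,\ee^{-2t}$ for $t\geqslant\widetilde t$, whence $\MF(t)=V(t)-V'(t)\leqslant\kst\,\ee^{-2t}$.

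The step I expect to be the main obstacle is obtaining the differential inequality with the \emph{sharp} decay $\ee^{-2t}$ rather than a slower one. The Gauss--Bonnet/Gauss--equation bookkeeping by itself only controls $\MF$ up to the additive constant $4\pi$ — which reflects the flat model, where $\MF\equiv4\pi$ does not decay at all — so the decay cannot come from curvature integrals alone. It must be extracted from the quantitative interplay between the pinching constant $\varepsilon$ and the sub--Euclidean volume growth: concretely, one has to show that a pinched end with $\alpha>4/3$ forces $\MF(t)\geqslant 3\,V(t)$ asymptotically, and it is in establishing this that the two--sided nature of \eqref{superquadratic} together with $\varepsilon>0$ must be used in an essential way.
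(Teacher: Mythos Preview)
Your proposal rests on a misconception about what drives the decay. You assert that ``the decay cannot come from curvature integrals alone'' and that the two--sided volume bound~\eqref{superquadratic} with $\alpha>4/3$ ``must be used in an essential way.'' This is false: the lemma is proved using \emph{only} the Ricci--pinching, the monotonicity formula~\eqref{explicit-derivative}, and the initial condition $\MF(0)<4\pi$. The volume growth and the threshold $\alpha>4/3$ enter only later, in the proof of \cref{thm:main-2}, and play no role here.

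The mechanism you miss is a self--contained ODE argument. Applying Gauss--Bonnet and the Gauss equation to each connected component of $\partial\Omega_t$, together with $\Ric\geqslant\varepsilon\sca g$, yields a dichotomy: if every component has genus $\geqslant1$, then $2\int\Ric(\nu,\nu)+|\ringg{\h}|^2\,\dif\mu\geqslant\int\H^2\dif\mu$, and \eqref{explicit-derivative} with \eqref{FHdis} gives $\MF'\leqslant-2\MF$; if some component has genus $0$, then $2\int\Ric(\nu,\nu)\,\dif\mu\geqslant\varepsilon\bigl(16\pi-\int\H^2\dif\mu\bigr)$ on that piece, and combining with the $(\H-2|\nabla w|)^2$ term one gets $\MF'\leqslant\varepsilon(2\MF-8\pi)$. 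Hence $\MF'(t)\leqslant\max\{-2\MF(t),\,\varepsilon(2\MF(t)-8\pi)\}$ almost everywhere. Since $\MF(0)<4\pi$, the second branch is strictly negative and forces $\MF$ below $8\pi\varepsilon/(2+2\varepsilon)$ in finite time; thereafter the first branch applies and yields $\MF(t)\leqslant\kst\,\ee^{-2t}$.

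Your own flat--model observation ($\MF\equiv4\pi$ in $\R^3$) in fact points to the correct idea: the argument works precisely because $\Omega$ was chosen around a point with $\sca>0$, so that $\MF(0)<4\pi$ \emph{strictly}, and pinching then prevents $\MF$ from stalling near $4\pi$. No Li--Yau asymptotics or volume information is needed. Finally, your proposed closing step --- establishing $\MF(t)\geqslant3V(t)$ --- is circular: since $\MF=V-V'$, that inequality is literally equivalent to $V'\leqslant-2V$, which is the decay you set out to prove.
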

\begin{proof}
If $\Sigma$ is a closed, connected surface in $(M,g)$ with $\Ric\geqslant\varepsilon\mathrm{R} g$, we have
\begin{align} 
2\int_{\Sigma}\Ric(\nu,\nu)\,\dif\mu
 &\geqslant\varepsilon\left(16\pi-\int_{\Sigma}\H^2\dif\mu\right) &\quad\text{if}\;\mathrm{genus}(\Sigma)=0,\label{eq:genere_zero}\\
2\int_{\Sigma}\Ric(\nu,\nu)+\abs{\ringg{\h}}^2\dif\mu&\geqslant\int_{\Sigma}\H^2\dif\mu&\quad\text{if}\;\mathrm{genus}(\Sigma)\geqslant 1.\label{eq:genere_maggiore_uno}
\end{align}
These two inequalities follow from the Gauss--Bonnet theorem and the Gauss--Codazzi equations, taking into account the pinching condition in the first one (see~\cite[Lemma 8]{huisken_koerber_24}).\\
Suppose that $t\geqslant0$ is a regular value of $w$, then the number of the connected components of $\partial \Omega_t$ is finite, by its compactness. If all of them have genus greater or equal to one, by inequality~\eqref{explicit-derivative} and using estimate~\eqref{eq:genere_maggiore_uno} for every single connected component, after adding we obtain
\begin{equation}
-2\MF'(t)\geqslant \int_{\partial\Omega_t}
2\Ric(\nu,\nu)+2\vert{\ringg{\h}}\vert^2\dif\mu \geqslant \int_{\partial\Omega_t}\H^2\dif\mu\geqslant 4\MF(t),
\end{equation}
where the last inequality is given by formula~\eqref{FHdis}. If there exists at least one connected component with genus zero, letting $\Sigma^1_t\neq \varempty$ be the union of the $n\in\N$ connected components of genus zero and $\Sigma^2_t$ the union of the connected components of genus greater than one, by inequalities~\eqref{explicit-derivative} and~\eqref{eq:genere_zero}, we have
\begin{align}
-2\MF'(t)&\geqslant \int_{\partial\Omega_t} 2\Ric(\nu,\nu) +\left({\H} -2\abs{\nabla w}\right)^2\dif\mu\\
&\geqslant\int_{\Sigma^1_t} 2\Ric(\nu,\nu) +\varepsilon\left({\H} -2\abs{\nabla w}\right)^2\dif\mu+\varepsilon\int_{\Sigma^2_t} \left({\H} -2\abs{\nabla w}\right)^2\dif\mu\\
&\geqslant \varepsilon\left(16n\pi -4\int_{\Sigma^1_t} \H\abs{\nabla w}-\abs{\nabla w}^2\dif\mu\right)
-4\varepsilon\int_{\Sigma^2_t} \H\abs{\nabla w}-\abs{\nabla w}^2\dif\mu\\
&=\varepsilon\left(16n\pi -4\int_{\partial\Omega_t} \H\abs{\nabla w}-\abs{\nabla w}^2\dif\mu\right)\\
&\geqslant\varepsilon\left(16\pi -4\MF(t)\right),
\end{align}
where we used the fact that $\varepsilon\leqslant 1/3$ (this follows by tracing the Ricci--pinching condition).\\
Hence, we can conclude that for almost every $t\in [0,+\infty)$, there holds 
\begin{equation}
\MF'(t)\leqslant\max\{-2\MF(t),\varepsilon\left(2\MF(t)-8\pi\right)\}.
\end{equation}
The thesis then follows from this differential inequality, keeping into account that $\MF$ is locally absolutely continuous, by \cref{monotonicity}. Indeed, by the monotonicity of $\MF$, either $\MF(t) \geqslant 8\pi\varepsilon/(2+2\varepsilon)$ for every $t\geqslant 0$, or there exists $\widetilde{t}\geqslant 0$ such that $\MF(t) \leqslant 8\pi\varepsilon/(2+2\varepsilon)$ for every $t \geqslant \widetilde{t}$. In the first case, $\MF'(t)\leqslant \varepsilon(2 \MF(t)- 8\pi) $, for every $t\geqslant 0$ and $\MF(t) \leqslant \MF(0) < 4 \pi$. Hence, there must exist some $t\geqslant 0$ such that $\MF(t) < 8\pi\varepsilon/(2+2\varepsilon)$, which is a contradiction. In the second case, $\MF'(t)\leqslant-2\MF(t)$ for all $t\geqslant\widetilde{t}$, which implies $\MF(t)\leqslant 4\pi \ee^{-2(t- \widetilde{t})}$, hence the thesis.
\end{proof}

Now we introduce another function $\MG$, defined at every regular value $t\in[0,+\infty)$ of $w$ as
$$
\MG(t)= \int_{\partial\Omega_t}\vert\nabla w\vert^2\dif\mu.
$$

\begin{lemma}\label{F-positive}
For almost every $t\in [0,+\infty)$, there holds $0\leqslant \MG(t)\leqslant \MF(t)$. In particular, 
$$
\lim_{t\to+\infty}\MF(t)=\lim_{t\to+\infty}\MG(t)=0.
$$
\end{lemma}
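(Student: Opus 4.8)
The nonnegativity $\MG(t)\ge 0$ is immediate from the definition, so the real content is the upper bound $\MG\le\MF$ together with the vanishing of the two limits. The plan is to first show that $\MG$, like $\MF$, extends to a locally absolutely continuous function on $[0,+\infty)$ obeying a clean first--order relation with $\MF$, and then to integrate that relation. Concretely, I would apply the divergence theorem to the vector field $|\nabla w|\nabla w$ on the slabs $\{t<w<T\}$ between regular values, treating the critical set by the same cut--off approximation used in \cref{monotonicity}. Since $\langle|\nabla w|\nabla w,\nu\rangle=|\nabla w|^2$ on each level set, the boundary integrals produce exactly $\MG(T)-\MG(t)$; on the other hand, using $\Delta w=|\nabla w|^2$ and the level--set identity $\partial_\nu|\nabla w|=|\nabla w|^2-\H|\nabla w|$ one computes $\div(|\nabla w|\nabla w)=|\nabla w|^2\,(2|\nabla w|-\H)$, so the coarea formula gives
$$
\MG(T)-\MG(t)=\int_t^T\big(\MG(\tau)-\MF(\tau)\big)\dif\tau .
$$
Hence $\MG$ is locally absolutely continuous and $\MG'=\MG-\MF$ almost everywhere.

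Rewriting this as $\tfrac{\dd}{\dd t}\big(\ee^{-t}\MG\big)=-\ee^{-t}\MF$ and integrating from $t$ to $T$, I would let $T\to+\infty$ to obtain the representation
$$
\MG(t)=\ee^{t}\int_t^{+\infty}\ee^{-s}\MF(s)\dif s,
$$
\emph{provided} the boundary contribution $\ee^{-T}\MG(T)$ tends to $0$. Granting this, the bound $\MG\le\MF$ follows at once from the monotonicity of $\MF$ proved in \cref{monotonicity}: since $\MF(s)\le\MF(t)$ for $s\ge t$,
$$
\MG(t)=\ee^{t}\int_t^{+\infty}\ee^{-s}\MF(s)\dif s\le \ee^{t}\,\MF(t)\int_t^{+\infty}\ee^{-s}\dif s=\MF(t).
$$
Finally, \cref{lem:limit} gives $\MF(t)\to0$, and the sandwich $0\le\MG\le\MF$ forces $\MG(t)\to0$ as well.

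The main obstacle is precisely the decay $\ee^{-T}\MG(T)\to0$, that is, a subexponential growth bound for $\MG$; since the relation $\MG'=\MG-\MF$ is an unstable linear ODE, such an a priori bound is genuinely needed to select the non--blowing--up branch. I would derive it from conservation of the harmonic flux together with a gradient estimate. As $u$ is harmonic, $\int_{\partial\Omega_t}|\nabla u|\dif\mu$ is independent of $t$, say equal to $C_0>0$, and because $|\nabla w|=|\nabla u|/u=\ee^{t}|\nabla u|$ on $\partial\Omega_t$ we obtain $\int_{\partial\Omega_t}|\nabla w|\dif\mu=C_0\,\ee^{t}$, whence
$$
\MG(t)=\int_{\partial\Omega_t}|\nabla w|^2\dif\mu\le\Big(\sup_{\partial\Omega_t}|\nabla w|\Big)\,C_0\,\ee^{t}.
$$
Since $\Ric\ge0$ and $u>0$ is harmonic on $M\setminus\Omega$, the Cheng--Yau gradient estimate yields $|\nabla w|=|\nabla\log u|\le C/d(\emptyarg,o)$ far from $\Omega$; as the level sets $\partial\Omega_t$ diverge to infinity (the compact sets $\Omega_t=\{u\ge\ee^{-t}\}$ exhausting $M$ as $t\to+\infty$, so that $\min_{\partial\Omega_t}d(\emptyarg,o)\to+\infty$), we get $\sup_{\partial\Omega_t}|\nabla w|\to0$, hence $\MG(t)=o(\ee^{t})$ and in particular $\ee^{-T}\MG(T)\to0$. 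This closes the argument.
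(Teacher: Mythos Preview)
Your argument is correct, but it takes a genuinely different route from the paper's. Both approaches arrive at the same first--order relation $\MG'=\MG-\MF$ (you via the divergence theorem applied to $|\nabla w|\nabla w$, the paper by direct computation at regular values). From there, however, the paper simply \emph{cites} an external monotonicity result (\cite[Theorem~3.1]{benatti_fogagnolo_mazzieri_minkowskii_2022}) asserting that $\MG$ admits a nonincreasing $C^1$--extension; once $\MG'\leqslant 0$ is known, the inequality $\MG\leqslant\MF$ is immediate from $\MG'=\MG-\MF$, with no further analysis required. You instead treat $\MG'=\MG-\MF$ as an unstable linear ODE and need the a~priori subexponential bound $\MG(T)=o(\ee^{T})$ to select the correct branch, which you obtain from the conserved harmonic flux together with the Cheng--Yau gradient estimate. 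Your approach is longer and brings in an extra analytic tool, but it is more self--contained: it avoids importing the monotonicity of $\MG$ as a black box and shows that the conclusion follows directly from standard potential--theoretic estimates on manifolds with $\Ric\geqslant 0$. The paper's approach is considerably shorter but effectively outsources the heart of this step to another reference.
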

\begin{proof}
As a consequence of~\cite[Theorem 3.1]{benatti_fogagnolo_mazzieri_minkowskii_2022} the function $\MG$ admits a nonincreasing $C^1$--extension on all $[0,+\infty)$ (indeed, $\MG(t)=F_2^1(e^{t})$, where $F^\beta_{p}$ are the monotone quantities introduced in~\cite{benatti_fogagnolo_mazzieri_minkowskii_2022}). One can readily check that at every regular value $t\in[0,+\infty)$ of $w$ (almost all, by Sard theorem), we have
$$0\geqslant\MG'(t)=\MG(t)-\MF(t),$$
which gives the thesis.
\end{proof}

We then need the notion of {\em normalized capacity} of a bounded closed set $D\subseteq M$:
\begin{equation}
\ncapa_2(\partial D) =\inf\bigg\{\frac{1}{4\pi }\int_{M\setminus D}\abs{\nabla\psi}^2\dif\mathrm{Vol}\,\,\bigg\vert\,\,\psi\in C^\infty_c(M),\psi\geqslant\chi_D\bigg\}.
\end{equation}
The relation of such capacity with the function $w$ is given by the fact that (recalling that $w=-\log u$ with $u$ the harmonic function solving problem~\eqref{eq:harmonic_potential}) 
\begin{equation}
\ncapa_2(\partial\Omega)=\frac{1}{4\pi }\int_{M\setminus\Omega}\abs{\nabla u}^2\dif\mathrm{Vol}=\frac{1}{4\pi}\int_{\partial\Omega}\abs{\nabla u}\,\dif\mu=\frac{1}{4\pi}\int_{\partial\Omega}\abs{\nabla w}\,\dif\mu,
\end{equation}
where we kept into account that $|\nabla w|=|\nabla u|$ on $\partial\Omega$, as $u=1$ (see~\cite[Proposition~2.8]{benatti_fogagnolo_mazzieri_minkowskii_2022} for a detailed justification of the first two equalities). Moreover, with the same argument, at every regular value $t\in[0,+\infty)$ of $w$, we have (\cite[Proposition~2.9]{benatti_fogagnolo_mazzieri_minkowskii_2022})
\begin{equation}\label{eqcar1}
\ncapa_2(\partial\Omega_t)
=\frac{1}{4\pi}\int_{\partial\Omega_t}\abs{\nabla w}\,\dif\mu=\frac{e^t}{4\pi}\int_{\partial\Omega_t} \abs{\nabla u}\,\dif\mu=\frac{e^t}{4\pi}\int_{\partial\Omega} \abs{\nabla u}\,\dif\mu=\ee^t\ncapa_2(\partial\Omega) ,
\end{equation}
where we used again the divergence theorem in the domain $\Omega_t\setminus\Omega$.

\medskip

\begin{proof}[Proof of \cref{thm:main-2}] 
We consider first the case in which $(M,g)$ is orientable.

We need the following ``classical'' estimates for a solution $u:M\setminus \Omega \to (0,1]$ of problem~\cref{eq:harmonic_potential} (see for instance~\cite{agostiniani_sharpgeometricinequalitiesclosed_2020, Cheng-Yau75,Li-Yau86}):
there exist a positive constant $\kst=\kst(M,\Omega)$ such that for all $x\in M\setminus \Omega$, 
\begin{equation}\label{Li-Yau}
u(x) \leqslant \kst\, d(x,o)^{1-\alpha},
\end{equation}
where $\alpha$ is the exponent in condition~\eqref{superquadratic}.\\
By equation~\eqref{eqcar1} and H\"older inequality, at every regular value $t\in [0,+\infty)$ of $w$, we have
\begin{equation}
\ee^{3t} \ncapa_2(\partial \Omega)^3
=\ncapa_2(\partial \Omega_t)^3=\left(\frac{1}{4\pi} \int_{\partial \Omega_t} \abs{\nabla w}\,\dif\mu \right)^3 
\leqslant \frac{1}{(4\pi)^3}\left( \int_{\partial \Omega_t} \abs{\nabla w}^{-1}\,\dif\mu\right) \left( \int_{\partial \Omega_t} \abs{\nabla w}^2\dif\mu\right)^2
\end{equation}
and from Lemmas~\ref{lem:limit} and~\ref{F-positive}, we know that there exists $\widetilde{t}\in [0,+\infty)$ such that for all $t\in [\widetilde{t}, +\infty)$, there holds
\begin{equation}
\int_{\partial \Omega_t} \abs{\nabla w}^2\dif\mu=\MG(t)\leqslant\kst \ee^{-2t},
\end{equation}
for a positive constant $\kst$. Thus, using the coarea formula, we obtain
\begin{equation}
\frac{d}{dt}{\mathrm{Vol}}(\{w\leqslant t\})=\int_{\partial\Omega_t}\abs{\nabla w}^{-1} \,\dif\mu\geqslant\big[\, 4\pi \ncapa_2(\partial \Omega)\big]^3 \ee^{3t}/\MG^2(t)\geqslant\big[\, 4\pi \ncapa_2(\partial \Omega)\big]^3 \ee^{7t}/\kst^2.
\end{equation}
for almost every $t\in [0,+\infty)$. Let $R_t=\sup\big\lbrace d(q,o)\,:\, q\in \{w\leqslant t\}=\Omega_t\big\rbrace$ for any $t\in[0,+\infty)$ and $t_n\to+\infty$ be an increasing sequence of regular values of $w$ (whose existence is again guaranteed by Sard theorem). Integrating the above inequality on $[0, t_n]$ and using the superquadratic volume growth assumption, we get
\begin{equation}\label{stima}
\frac{1}{7\kst^{2}} \big[\,4\pi \ncapa_2(\partial \Omega)\big]^3\left(\ee^{7t_n}-1\right)
\leqslant{\mathrm{Vol}}(\{w\leqslant t_n\})\leqslant {\mathrm{Vol}}(B_{R_{t_n}}(o))\leqslant \kst_{\mathrm{vol}} \,R_{t_n}^{1+\alpha}.
\end{equation}
Being $w=-\log u$, by estimate~\eqref{Li-Yau}, we have $w(x) \geqslant -\log(\kst d(x,o)^{1-\alpha})$, then if $d(q,o)=R_{t_n}$, it must be $q\in\partial\Omega_{t_n}$, that is, $w(q)=t_n$ and we have
$$
t_n=w(q)\geqslant -\log(\kst d(q,o)^{1-\alpha})=-\log(\kst R_{t_n}^{1-\alpha}),
$$
hence, $R_{t_n}^{\alpha-1}\leqslant \kst \ee^{t_n}$, which implies $R_{t_n}^{\alpha+1}\leqslant \kst \ee^{\frac{\alpha+1}{\alpha-1}t_n}$ for a positive constant $\kst=\kst(M,\Omega)$. Then, by inequality~\eqref{stima}, we conclude that
\begin{equation}
\ee^{7t_n}-1
\leqslant \kst R_{t_n}^{\alpha+1}\leqslant \kst e^{\frac{\alpha+1}{\alpha-1}t_n},
\end{equation}
which is clearly a contradiction if $\alpha>{4}/{3}$, as $t_n$ can be chosen arbitrarily large.  This argument proves the thesis under the additional assumption that the manifold is orientable. In particular, $(M,g)$ is isometric to the Euclidean space $\mathbb{R}^3$, which is the only flat Riemannian $3$-manifold with superquadratic volume growth with $\alpha>\frac{4}{3}$ in \eqref{superquadratic}.

Now, suppose that $(M,g)$ is nonorientable. We consider its orientable double cover, which is also Ricci-pinched and has superquadratic volume growth. Hence, this cover must be Euclidean space. This yields a contradiction, as $\R^3$ cannot be the double cover of any nonorientable Riemannian $3$-manifold.
\end{proof}

Replacing the ``starting subset" $\overline{B}_r(o)$ with a different regular subset $\Omega$ with a compact boundary, such that
\begin{equation}\label{F(0)bis}
\int_{\partial\Omega}\H^2\dif\mu<16\pi,
\end{equation}
and repeating the above argument, one obtains the same conclusion. It is then straightforward to obtain also the following result when $M$ has a boundary.

\begin{theorem}\label{thm:main-3}
There not exist a complete, connected, non--compact, orientable Ricci--pinched Riemannian $3$--manifold $(M, g)$ that has superquadratic volume growth with $\alpha>4/3$ in \cref{superquadratic} and a compact smooth boundary $\partial M$ satisfying
\begin{equation}\label{F(0)bis2}
\int_{\partial M}\H^2\dif\mu<16\pi.
\end{equation}
\end{theorem}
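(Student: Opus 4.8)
The plan is to recognize that the asserted non-existence is exactly the contradiction already produced in the proof of \cref{thm:main-2}, now read with the given boundary $\partial M$ playing the role of $\partial B_r(o)$. The key structural observation is that throughout that proof one works not on $M$ but on the exterior region $M\setminus\Omega$, which is itself a complete, connected, non-compact Riemannian $3$-manifold with compact smooth boundary $\partial\Omega$. Hence a manifold $(M,g)$ with compact smooth boundary $\partial M$ is precisely the same analytic setting, and I would simply rerun the argument with $\partial M$ in place of $\partial\Omega$.

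Concretely, I would first fix any reference point $o\in M$ and solve on $M$ the harmonic problem of type \eqref{eq:harmonic_potential}, with $u=1$ on $\partial M$ and $u\to 0$ as $d(x,o)\to+\infty$. The superquadratic volume growth with $\alpha>4/3$ still guarantees, via the potential theory of Varopoulos, Li--Yau and Agostiniani--Fogagnolo--Mazzieri, a unique smooth solution $u\in(0,1]$, so that $w=-\log u$ solves the associated problem \eqref{eq:moser_2_potential} with inner boundary $\partial M$. With $w$ fixed, I would define $\MF$, $\MG$ and the super-level sets $\Omega_t$ exactly as before.

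The only point at which the shape of the starting surface entered the proof of \cref{thm:main-2} was in deducing $\MF(0)<4\pi$ from \eqref{F(0)}. Here this is supplied directly by hypothesis \eqref{F(0)bis2}: by \eqref{FHdis} one has $\MF(0)\leqslant\tfrac14\int_{\partial M}\H^2\dif\mu<4\pi$. Everything else then carries over verbatim, since none of the intermediate steps uses any feature of the initial surface beyond its compactness. In particular, \cref{monotonicity} gives the monotone, locally absolutely continuous extension of $\MF$; \cref{lem:limit} gives the decay $\MF(t)\leqslant\kst\,\ee^{-2t}$ for large $t$ (its proof only invokes the pinching inequalities \eqref{eq:genere_zero} and \eqref{eq:genere_maggiore_uno} applied to each compact component of $\partial\Omega_t$); \cref{F-positive} gives $\MG\leqslant\MF$; and the capacity identity \eqref{eqcar1} together with the Li--Yau estimate \eqref{Li-Yau} remain available. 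Feeding these into the concluding computation of \cref{thm:main-2} yields, along an increasing sequence of regular values $t_n\to+\infty$, the inequality $\ee^{7t_n}-1\leqslant\kst\,\ee^{\frac{\alpha+1}{\alpha-1}t_n}$; since $\alpha>4/3$ is exactly the condition $7>\tfrac{\alpha+1}{\alpha-1}$, letting $n\to+\infty$ is a contradiction, so no such $(M,g)$ exists.

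I do not expect a genuine obstacle here: the one thing to verify is that the linear potential theory and the Li--Yau gradient estimate hold on a manifold with compact boundary exactly as on an exterior region, and this is automatic because that exterior region is itself a manifold with compact boundary. The entire content of the proof is thus the bookkeeping remark that \eqref{F(0)bis2} replaces \eqref{F(0)} with no other change.
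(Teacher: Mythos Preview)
Your proposal is correct and matches the paper's approach exactly: the paper simply remarks that one replaces the starting subset $\overline{B}_r(o)$ by any regular $\Omega$ with compact boundary satisfying $\int_{\partial\Omega}\H^2\dif\mu<16\pi$ and repeats the argument of \cref{thm:main-2} verbatim, which is precisely what you do with $\Omega=M$ and $\partial\Omega=\partial M$. Your observation that the only role of the initial surface is to secure $\MF(0)<4\pi$ via \eqref{FHdis}, now supplied by \eqref{F(0)bis2}, is exactly the content of the paper's one-line justification.
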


\begin{remark}
Clearly, as before, the case $\AVR>0$ correspond to the case $\alpha=2$ in assumption~\eqref{superquadratic}, hence, in particular, 
there not exist a complete, connected, non--compact, orientable, Ricci--pinched Riemannian $3$--manifold $(M, g)$ with $\AVR>0$ and a compact smooth boundary $\partial M$ satisfying condition~\eqref{F(0)bis2}.
\end{remark}

\begin{remark}
If one is interested in proving only \cref{thm:main-2}, it is known that $M$ must be diffeomorphic to $\R^3$ (see~\cite{zu_shun-hui_1994}), then $M$ is orientable and thanks to the strong maximum principle, one can show that the regular level sets of $w$ are connected (see~\cite{AMMO, AMO24} for more detail). This observation simplifies a little bit the proof of \cref{lem:limit} in such a case.
\end{remark}

\begin{ackn}
All the authors are members of the INDAM--GNAMPA. Luca Benatti and Alessandra Pluda are partially supported by the BIHO Project ``NEWS -- NEtWorks and Surfaces evolving by curvature'' and by the MUR Excellence Department Project awarded to the Department of Mathematics of the University of Pisa. Carlo Mantegazza is partially supported by the PRIN Project 2022E9CF89 ``GEPSO -- Geometric Evolution Problems and Shape Optimization''. Luca Benatti is partially supported by PRIN Project 2022PJ9EFL ``Geometric Measure Theory: Structure of Singular Measures, Regularity Theory and Applications in the Calculus of Variations''. Alessandra Pluda is partially supported by the PRIN Project 2022R537CS ``$\rm{NO}^3$ -- Nodal Optimization, NOnlinear elliptic equations, NOnlocal geometric problems, with a focus on regularity''.
\end{ackn}

\bibliographystyle{amsplain}
\bibliography{Ricci-pinched}

\end{document}